\documentclass[12pt,twoside]{article}
\usepackage{amsmath,amssymb,mathrsfs,graphicx,bbm,amsfonts,amsthm,mathtools}
\mathtoolsset{showonlyrefs}
\usepackage[utf8x]{inputenc}
\usepackage{nicefrac}
\usepackage{xcolor}
\usepackage[normalem]{ulem}
\usepackage[english]{babel}
\usepackage{dsfont}

\usepackage{graphicx,subcaption}
\usepackage{url}
\usepackage{blindtext}

\usepackage[colorlinks=true, citecolor = green, linkcolor=blue, breaklinks, pdfauthor ={"Elena Magnanini}]{hyperref}
\usepackage{todonotes}
\usepackage{comment}

\usepackage{graphicx}
\usepackage[title]{appendix}%

\newtheorem{thm}{Theorem}[section]

\newtheorem{rem}[thm]{Remark}
\newtheorem{prop}[thm]{Proposition}
\newtheorem{cor}[thm]{Corollary}
\newtheorem{example}[thm]{Example}
\newtheorem{assumption}[thm]{Assumption}
\newtheorem{defn}[thm]{Definition}

\DeclareMathAlphabet{\mathbfit}{OML}{cmm}{b}{it}

\makeatletter
\@addtoreset{equation}{section}
\makeatother

\usepackage{titling}
    \pretitle{\centering\LARGE\scshape}
        \posttitle{\vskip 0.75cm}

    \predate{\vskip 0.75 cm \centering\large}
        \postdate{\par}
\newcommand{\cadlag}{\ifmmode\mathcal D\else c\`adl\`ag \fi}

\newcommand{\ba}{\begin{array}}
	\newcommand{\ea}{\end{array}}
\newcommand{\bea}{\begin{eqnarray}}
	\newcommand{\eea}{\end{eqnarray}}
\newcommand{\be}{\begin{equation}}
	\newcommand{\ee}{\end{equation}}

\def \z {{\z}}

\def \z {{\zeta}}

\def \1{\mathbbm{1}} % funziona solo con il package bbm.

\definecolor{Sea}{RGB}{70, 170, 180}
\author{
Ivan Kryven\thanks{Utrecht University,
i.v.kryven@uu.nl}, Elena Magnanini \thanks{WIAS Berlin, magnanini@wias-berlin.de} 
}

\title{Spatial Coagulation Systems with Mercer Kernels: Replicator Dynamics and Gelation}

\date{\today}

\usepackage[top=2.5cm, bottom=2.5cm,left=2.5cm,right=2.5cm]{geometry}

\begin{document}
\maketitle
\abstract{ We study a spatial version of the Smoluchowski coagulation equation in which particles carry both an integer mass and a spatial location, and coagulate according to a mass–space product kernel. After coagulation, the resulting particle inherits the sum of the parent masses and a single spatial location.
We provide a recursive formula for the solution and show that, while the first local mass moments are conserved up to gelation time at each location, the second local mass moments evolve according to a closed system of differential equations. When the spatial part of the kernel is of Mercer type, we further show that this system reduces to a replicator equation under a time rescaling and normalisation.
We then use the fact that the replicator equation admits monotone functionals to quantify how spatial heterogeneity influences the rate of gelation and derive bounds on the gelation time. We also discuss the implications of these results for several important classes of Mercer kernels, including radial, diffusion, and translation‑invariant kernels.}
\noindent  \bigskip
\\
{\bf Keywords:}  Spatial Smoluchowski equation, gelation, replicator dynamics, Mercer kernels. 
\\\\
{\bf AMS Subject Classification 2010:} 60K35,  35Q70, 82C22.

\section{Introduction}
Coagulation models are prevalent across numerous disciplines, spanning from physical chemistry (to describe polymer formation), to astrophysics, where they simulate galaxy formation. 
Introduced by Smoluchowski in 1916 \cite{Smoluchowski16}, the Smoluchowski coagulation equation is a classical mean-field model describing the (irreversible) evolution of cluster densities under pairwise coagulation dynamics. 
In its discrete form, if $\lambda_t(i)$ denotes the density of particles of size $i\in \mathbb N$ at time $t\geq 0$, the equation reads
\begin{equation}\label{eq:classic_smolu}
\frac{d}{dt}\lambda_t(i)
=
\frac12\sum_{j=1}^{i-1}K(j,i-j)\lambda_t(j)\lambda_t(i-j)
-
\lambda_t(i)\sum_{j\geq1}K(i,j)\lambda_t(j),
\end{equation}
where $K(i,j)$ is the coagulation kernel describing the interaction rate between clusters of sizes $i$ and $j$.

The analysis of the probabilistic counterpart came only in the late 1960s and 1970s, by the introduction of an interacting particle system known as the Marcus--Lushnikov process \cite{Lushnikov78,Marcus68,aldous99}, whose hydrodynamic limit concentrates on trajectories of the Smoluchowski equation, sometimes including an additional term encoding the `sol-gel' interaction (the \emph{Flory equation}) \cite{fournier-laurencot-09,fournier-giet-04}. Beyond hydrodynamic limits, a natural question concerns the fluctuations around the deterministic mean--field behavior, for which we refer for instance to \cite{K,AmorimArellanoJara2026}.

From the analytical perspective, an extensive literature has been devoted to the study of existence and uniqueness of Eq. \eqref{eq:classic_smolu}, depending on the conditions imposed on the interaction kernel \cite{mcleod-62-1, white-80, norris-coag-99, fournier-laurencot-06}. 
A central question in the theory concerns whether solutions conserve mass globally or not, depending on the kernel. In the latter case, we say that the system exhibits \emph{gelation} \cite{Fournier25, jeon98, rezakhanlou2013, escobedo-et-al-mass-cons}, namely there is a loss of mass due to the emergence of macroscopic particles by some time $t \geq 0$.
%%%%%%%%%%

 In the 2000s, Norris introduced a general framework for existence and uniqueness in the \emph{cluster coagulation model} \cite{norris-cluster-coag}, capable of incorporating multiple particle properties such as velocity, morphology, and spatial location. It is useful to distinguish two broad categories of such properties.
Some properties are additive under coagulation. Typical examples include mass, charge, or surface area. These can be incorporated through multicomponent coagulation models, in which particles carry vector-valued attributes that are simply added when clusters merge \cite{ferreira2021,heydecker2024,hoogendijk2024,ferreira2025}.
Other properties are inherently non-additive and require different analytical techniques. Spatial location may be an example: when two particles occupying different sites coagulate, the location of the newly formed cluster cannot be obtained by adding the locations of the parent particles. Instead, an additional mechanism is needed to determine how spatial information is updated after coagulation.  
Although spatial effects were already implicit in Smoluchowski’s original motivation, a treatment of coagulation models with explicit spatial dependence has emerged only recently.
In this work we focus on coagulation models whose interaction rates depend not only on particle masses but also on their spatial location, a regime we refer to as the \emph{inhomogeneous} setting. We refer the reader to \cite{AnIyMa24, AnIyMa24_bis, AKLP23, AKLP26} for recent developments in this direction.
%%%%%%%%

Our goal is to understand how spatial inhomogeneities influence the evolution of moments and the onset of gelation in a spatial extension of the discrete Smoluchowski equation.
In this model, particles are characterised by both a size $i\in\mathbb N$ and a location $x$ belonging to a finite or countable set $\mathcal{S}$. The coagulation rate is determined by a mass--location kernel, and after coagulation the resulting particle has the sum of the parent masses while inheriting its spatial location from one of the parent particles. Under suitable moment-control conditions, we show that the first \emph{local} moment is conserved and we characterise the evolution in time of the second moment. Then, we show that  
the unique solution of the spatial Smoluchowski equation satisfies a recurrence formula similar to \cite[Theorem~2.1 (iii)]{GK}, and under mild assumptions on the spatial part of the kernel, we characterize the gelation time by an effective interaction coefficient measuring how the second moment is distributed across space. Such characterisation allows us to produce bounds on the gelation time, which varies depending on the choice of the space $\mathcal{S}$, the spatial component of the kernel $K$ and the initial distribution of particle masses. A side result of this analysis reveals an intricate connection between spatial coagulation and the replicator equation from evolutionary game  \cite{hofbauer1998}. 

The paper is organized in three main parts. Section~\ref{sub:modA} introduces the spatial equation together with the technical assumptions that we need. Section~\ref{sec:results} contains the main results and a subsection fully dedicated to important examples, and Section~\ref{sec:proofs} is devoted to proofs.

\paragraph{Technical contribution.}
The first main contribution of the paper is to show that, for Mercer spatial kernels, the evolution equation of the normalized second-moment profile has a replicator dynamics structure (see Theorem~\ref{thm:replicator-mercer}, Eq.~\ref{eq:replicator-mercer}). 
The replicator equation \cite{hofbauer1998} is a classical objects in evolutionary game theory and population dynamics, where it describes the evolution of population frequencies under selection. In our setting, however, the replicator form is not imposed but emerges directly from the coagulation equation after normalizing local second moments by the global second moment and rescaling time.

The replicator formulation plays a central role in the analysis. In the Mercer-kernel setting, it implies that the effective interaction coefficient $\alpha(\tau)$, which governs the growth of the global second moment, is nondecreasing along the dynamics. This monotonicity leads to better bounds on the gelation time, sharper than those available for general spatial kernels \cite[Theorem~2.2 (ii)]{norris-cluster-coag}, and forms the second main contribution of the paper. The proofs combine moment identities, nonlinear ODE techniques, and variational arguments associated with positive semidefinite kernels.

\section{The spatial coagulation equation}\label{sub:modA}

Let $\mathcal S$ be a finite or countably infinite set of spatial sites.  
For particle sizes $i\in\mathbb N$ and sites $x\in\mathcal S$, we consider time-dependent cluster densities $\lambda_t(i,x)\ge 0$ evolving according to the spatial Smoluchowski equation\footnote{This equation coincides with \cite[Eq.~(7.22)]{AKLP23}, combined with the (second) choice of the placement kernel described in \cite[Remark~1.1]{AKLP23}.}
\begin{equation}\label{SE1}
\begin{aligned}
\frac{d}{dt}\lambda_t(i,x)
&=
\sum_{y\in\mathcal S}\sum_{j=1}^{i-1}
\frac{j}{i}\,
K\bigl((j,x),(i-j,y)\bigr)\,
\lambda_t(j,x)\lambda_t(i-j,y)\\
&\hspace{3cm}-
\lambda_t(i,x)
\sum_{y\in\mathcal S}\sum_{j\ge1}
K\bigl((i,x),(j,y)\bigr)\lambda_t(j,y),
\end{aligned}
\end{equation}
with a normalised monodisperse initial condition
\begin{equation}\label{IC}
\sum_{i\ge1} i\,\lambda_0(i,x)=\lambda_0(1,x),
\qquad
\sum_{x\in\mathcal S}\lambda_0(1,x)=1.
\end{equation}

Compared with standard multitype coagulation models, where clusters consist of singletons of different types, here each cluster occupies a single site. When clusters $(j,x)$ and $(i-j,y)$ coagulate, the resulting cluster has size $i$, and its location is chosen from $\{x,y\}$ proportionally to the sizes of the merging clusters:
\[
\mathbb P[\text{new site}=x]=\frac{j}{i}.
\]
We define the local and global size moments
\begin{equation}\label{moments}
m_k(x,t):=\sum_{i\ge1} i^k\lambda_t(i,x),
\qquad
M_k(t):=\sum_{x\in\mathcal S} m_k(x,t),
\qquad k\in\mathbb N.
\end{equation}
%%%%%%%%%%
It may happen that, after a finite time, the total mass $M_1$ is no longer conserved. This phenomenon, known as \emph{gelation}, is interpreted as the emergence of one or more giant clusters carrying a positive fraction of the total mass. A standard way to characterise such phase transition is through the behaviour of the second moment: gelation occurs when $M_2$ blows up in finite time. Accordingly, we introduce the following definition (see also \cite[Def.~2(b)]{jeon98}).

\begin{defn}[Gelation time]\label{def:tg}
The gelation time of a solution of the Smoluchowski equation \eqref{SE1} is defined by 
\begin{equation}\label{eq:def:tg}
t_{\mathrm{gel}} := \sup\{t>0:\ M_2(t)<\infty\}.
\end{equation}
\end{defn}
Now we assume a multiplicative kernel in the size variable,
\begin{equation}\label{kernel}
K\bigl((i,x),(j,y)\bigr)=ij\,G(x,y),
\qquad x,y\in\mathcal S,
\end{equation}
where the spatial kernel $G:\mathcal S\times\mathcal S\to(0,\infty)$ is
uniformly bounded. Under this choice, \eqref{SE1} becomes
\begin{equation}\label{SE_prod}
\frac{d}{dt}\lambda_t(i,x)=
\sum_{y\in\mathcal S}\sum_{j=1}^{i-1} \frac{j}{i}\,j(i-j)\,G(x,y) \lambda_t(j,x)\lambda_t(i-j,y)-i\,\lambda_t(i,x) \sum_{y\in\mathcal S}\sum_{j\ge1}
j\,G(x,y)\lambda_t(j,y).
\end{equation}
%%%%%%%%%%%%
Since the kernel is uniformly bounded and \emph{approximately multiplicative} in the sense of
Norris, and since $M_2(0)<\infty$, the spatial Smoluchowski equation
\eqref{SE_prod} admits a unique solution up to the gelation time
$t_{\mathrm{gel}}$, and the second moment diverges as $t\to
t_{\mathrm{gel}}$ \cite[Theorem~2.2]{norris-cluster-coag}.

\medskip

Throughout the paper we will assume, when needed, one or both of the following
moment–control conditions.

\begin{assumption}\label{ass:moment-justification}
There exists $T_1>0$ such that the following hold for all $t\in[0,T_1)$.

\smallskip
\noindent\hypertarget{ass:A1}{\textnormal{(A1)}}  
For every $x\in\mathcal S$, the series

\[
\sum_{y\in\mathcal S} G(x,y)\,m_2(x,t)\,m_1(y,t)
\]

converges uniformly in $t$.

\smallskip
\noindent\hypertarget{ass:A2}{\textnormal{(A2)}}  
For every $x\in\mathcal S$, the series

\[
\sum_{y\in\mathcal S} G(x,y)\,m_2(x,t)\,m_2(y,t)
\]

converges uniformly in $t$.
\end{assumption}

\begin{rem}
If

\[
\sup_{x\in\mathcal S}\sum_{y\in\mathcal S}G(x,y)<\infty
\quad\text{and}\quad
\sup_{t\in[0,T_1)} m_2(x,t)<\infty
\ \text{for all }x\in\mathcal S,
\]
then \hyperlink{ass:A1}{(A1)} holds automatically.  
In particular, if $\mathcal S$ is finite, \hyperlink{ass:A1}{(A1)} is always satisfied.
\end{rem}

In the non-spatial case, when $\mathcal S$ consists of a single site,
condition \hyperlink{ass:A1}{(A1)} reduces to uniform boundedness of $M_2(t)$ on $[0,T_1)$, the
classical setting studied for example in \cite[Theorem~1]{mcleod-62-1}.

\section{Results}\label{sec:results}
Our main goal is to understand how the underlying space $\mathcal S$ and the associated spatial kernel $G$ influence the behaviour of the coagulation equation~\eqref{SE_prod} under suitable summability assumptions.  
A first observation is that the local first moment is conserved: for each site $x\in\mathcal S$, the total mass $\sum_{i\ge1} i\,\lambda_t(i,x)$
remains equal to its initial value. Thus, the dynamics do not induce any mass transport across space.  
In contrast, the local and global second moments satisfy a coupled system of differential equations in which the kernel $G$ appears explicitly. In this sense, $G$ determines the strength and structure of the interaction between sites and plays a central role in the onset of gelation.
\begin{prop}[Moments]\label{prop:moment-equations}
Under assumption \hyperlink{ass:A1}{(A1)},  the first moment is conserved at each site for $t\in[0,T_1)$,
\begin{equation}\label{eq:consm1}
m_1(x,t)=\lambda_0(1,x),
\qquad x\in\mathcal S,
\end{equation}
and the second  local and global moments satisfy differential equations:
\begin{align}
\frac{d}{dt} m_2(x,t)&= m_2(x,t)\sum_{y\in\mathcal S} G(x,y)m_2(y,t), \label{eq:m2-local-prop} \\
\frac{d}{dt} M_2(t)&= \sum_{x,y\in\mathcal S} G(x,y)m_2(x,t)m_2(y,t).\label{eq:M2-global}
\end{align}
\end{prop}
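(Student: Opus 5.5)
We multiply \eqref{SE_prod} by $i^k$ for $k=1,2$ and sum over $i\ge1$. This requires exchanging $\frac{d}{dt}$ with $\sum_i$ and rearranging the resulting double series. We then compute the gain and loss terms explicitly.

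\emph{First moment.} Multiplying the gain term by $i$ gives
\[
\sum_{i}\sum_{y}\sum_{j=1}^{i-1} j^2(i-j)\,G(x,y)\,\lambda_t(j,x)\lambda_t(i-j,y).
\]
Reindexing with $l=i-j$, this equals $\sum_y G(x,y)\,m_2(x,t)\,m_1(y,t)$. Multiplying the loss term by $i$ gives $\sum_i i^2\lambda_t(i,x)\sum_y\sum_j jG(x,y)\lambda_t(j,y)$, which is the same quantity. Hence $\frac{d}{dt}m_1(x,t)=0$, and by \eqref{IC}, $m_1(x,t)=\lambda_0(1,x)$.

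\emph{Second moment.} The gain term multiplied by $i^2$ gives
\[
\sum_y G(x,y)\sum_{j,l} j^2 l\,(j+l)\,\lambda_t(j,x)\lambda_t(l,y)
= \sum_y G(x,y)\bigl(m_3(x)m_1(y)+m_2(x)m_2(y)\bigr).
\]
The loss term multiplied by $i^2$ gives $\sum_y G(x,y)\,m_3(x)\,m_1(y)$. Subtracting, the $m_3$ contributions cancel and we obtain \eqref{eq:m2-local-prop}. Summing \eqref{eq:m2-local-prop} over $x$ gives \eqref{eq:M2-global}. For this last step, the exchange of $\frac{d}{dt}$ and $\sum_x$ needs the series $\sum_{x,y}G(x,y)m_2(x)m_2(y)$ to converge locally uniformly in $t$. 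This holds for $t<t_{\mathrm{gel}}$ because $G$ is bounded and $M_2<\infty$. Alternatively, we integrate \eqref{eq:m2-local-prop} in time, sum over $x$ using Tonelli (all terms are nonnegative), and differentiate the result.

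\emph{Main obstacle: justification.} The $m_3$ terms may be infinite, and the computation above is purely formal. To make it rigorous, we work with truncated moments $m_k^{(N)}(x,t)=\sum_{i\le N}i^k\lambda_t(i,x)$. These are finite sums, so differentiating them term by term is legitimate. After the same reindexing, the $i\le N$ truncation leaves a nonpositive remainder. For $k=1$ it is
\[
-\sum_y G(x,y)\sum_{j\le N<j+l} j^2 l\,\lambda_t(j,x)\lambda_t(l,y).
\]
Its absolute value is dominated by the tails of the series $\sum_y G(x,y)\,m_2(x)\,m_1(y)$. By \hyperlink{ass:A1}{(A1)} these tails converge to $0$ uniformly in $t\in[0,T_1)$. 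We therefore integrate the truncated identity on $[0,t]$ and let $N\to\infty$, using uniform convergence to pass the limit inside the time integral. This yields $m_1(x,t)=m_1(x,0)$.

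For $k=2$ we cannot cancel the $m_3$ terms before truncating, since they may diverge. Instead, in the truncated computation we pair the gain contribution $j^2 l\cdot 2jl$ with the cross term directly. The expansion $i^2=(j+l)^2$ splits the gain into a part matching the loss, which is exactly cancelled up to a tail, and the part $2j^2l^2$, which converges to $m_2(x)m_2(y)$. To control the tails we use the conserved mass $m_1$ and the a priori bound $M_2<\infty$. We expect this to be the delicate step. If a direct tail estimate fails, the fallback is to use the recursive representation and the unique-solution framework of \cite{norris-cluster-coag}, which guarantees enough moment regularity before $t_{\mathrm{gel}}$.
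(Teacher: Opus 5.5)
Your formal computation is the paper's: multiply \eqref{SE_prod} by $i$ or $i^2$, reindex with $l=i-j$, let the loss term cancel the $m_2(x)m_1(y)$ (resp.\ $m_3(x)m_1(y)$) part of the gain, and sum over $x$ by Tonelli. The difference lies in the justification. The paper bounds $\sum_i i\,|\frac{d}{dt}\lambda_t(i,x)|$ by $2\sum_y G(x,y)m_2(x,t)m_1(y,t)$ and differentiates term by term. For $m_2$ it invokes (A2) and the regularity of solutions from \cite{norris-cluster-coag} to assert that $\sum_i i^2\frac{d}{dt}\lambda_t(i,x)$ converges absolutely.

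Your truncation argument for $k=1$ is a sound and arguably cleaner substitute, with one correction. (A1) concerns convergence of the sum over sites $y$; it says nothing about the mass tails $\{j\le N<j+l\}$. So pass to the limit in $\int_0^t R_N(s)\,ds$ by dominated convergence instead. Use $R_N(s)\le \sup G\; m_2(x,s)$, which holds because $m_1^{(N)}(y,\cdot)$ is nonincreasing and hence $\sum_y m_1(y,s)\le 1$. Then use local boundedness of $M_2$ before gelation.

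For $k=2$, however, the direct tail estimate you hope for cannot work. The correct weight is $i^2\cdot\frac{j}{i}\,jl=j^3l+j^2l^2$, so the non-cancelling part is $j^2l^2$, not $2j^2l^2$; this matches your own formal step. The truncated identity is
\[
\frac{d}{dt}m_2^{(N)}(x,t)=\sum_{y}G(x,y)\Bigl[\sum_{j+l\le N}j^2l^2\,\lambda_t(j,x)\lambda_t(l,y)-\sum_{j\le N<j+l}j^3l\,\lambda_t(j,x)\lambda_t(l,y)\Bigr],
\]
and the remainder contains the $j=N$ contribution $N^3\lambda_t(N,x)\sum_y G(x,y)m_1(y,t)$. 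Finiteness of $\sum_j j^2\lambda(j)$ does not force $N^3\lambda(N)\to0$: take $\lambda(j)=j^{-3}$ for $j\in\{2^k:k\ge1\}$ and $0$ otherwise. So no bound in terms of $m_1$ and $M_2$ alone makes the remainder vanish. By themselves they only give the inequality $m_2(x,t)\le m_2(x,0)+\int_0^t m_2(x,s)\sum_y G(x,y)m_2(y,s)\,ds$.

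The missing ingredient is third-moment control. Your fallback supplies it, and so does the paper's appeal to \cite{norris-cluster-coag}. You can also obtain it inside your own framework:
\begin{itemize}
\item The same truncation gives $\frac{d}{dt}m_3^{(N)}(x,t)\le\sum_y G(x,y)\bigl(2m_3^{(N)}(x,t)m_2(y,t)+m_2(x,t)m_3^{(N)}(y,t)\bigr)$.
\item Summing over $x$ in integrated form and applying Gronwall yields $M_3(t)\le M_3(0)\exp\bigl(3\sup G\int_0^t M_2(s)\,ds\bigr)$, with $M_3(0)=1$.
\item With $m_3(x,\cdot)$ locally bounded, the remainder is dominated by $\sum_yG(x,y)m_1(y,t)\sum_{j>N/2}j^3\lambda_t(j,x)+m_2(x,t)\sum_yG(x,y)\sum_{l>N/2}l^2\lambda_t(l,y)$.
\item This bound vanishes after time integration by dominated convergence.
\end{itemize}
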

The conservation of the first moment allows one to simplify equation \eqref{SE_prod}
and obtain an equivalent on $[0,T_1]$ triangular system of differential equations, each coupled to the preceding ones.  Exploiting this triangular structure, we write a recursive
representation of the solution.

\begin{thm}[Recursive solution]
\label{thm:spatial-multiplicative}
Assume \hyperlink{ass:A1}{(A1)}. Then for each $k\ge1$, $\lambda_t(k,x)$ satisfies a non-homogeneous linear differential equation,
\begin{equation}\label{eq:triangular-thm}
\frac{d}{dt}\lambda_t(k,x)+k\,p(x)\lambda_t(k,x)=g_k(x,t), 
\qquad t\in[0,T_1),
\end{equation}
where
\[
p(x):=\sum_{y\in\mathcal S}\lambda_0(1,y)\,G(x,y),
\]
and
\begin{equation}\label{eq:gk-thm}
g_k(x,t)=\frac1k
\sum_{y\in\mathcal S} G(x,y)
\sum_{j=1}^{k-1} j^2(k-j)\,
\lambda_t(j,x)\lambda_t(k-j,y).
\end{equation}
In particular, the solution is recursively given by
\begin{equation}\label{lambdak}
\lambda_t(k,x)
=
e^{-k t p(x)}
\left(
\lambda_0(k,x)
+
\int_0^t e^{k s p(x)}\, g_k(s,x)\,ds
\right),
\qquad k\ge1.
\end{equation}
\end{thm}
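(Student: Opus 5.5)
We start from the product-kernel form \eqref{SE_prod} and simplify the loss term using Proposition~\ref{prop:moment-equations}. Under \hyperlink{ass:A1}{(A1)}, the first local moments are conserved on $[0,T_1)$, so $m_1(y,t)=\lambda_0(1,y)$ for every $y\in\mathcal S$. The loss term equals
\[
i\,\lambda_t(i,x)\sum_{y\in\mathcal S}G(x,y)\sum_{j\ge1}j\lambda_t(j,y)=i\,\lambda_t(i,x)\sum_{y\in\mathcal S}G(x,y)m_1(y,t)=i\,p(x)\,\lambda_t(i,x).
\]
Interchanging the sums over $y$ and $j$ is legitimate because all terms are nonnegative. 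The series $\sum_y G(x,y)m_1(y,t)$ is finite, since $\sum_y m_1(y,t)=1$ and $G$ is bounded. Hence $p(x)\le\sup G<\infty$ is a well-defined, time-independent coefficient.

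For the gain term we rewrite $\frac{j}{i}\,j(i-j)=\frac1i\,j^2(i-j)$. With $i=k$ this gives exactly $g_k(x,t)$ as in \eqref{eq:gk-thm}. Combined with the loss term, this yields \eqref{eq:triangular-thm}.

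Next we note the triangular structure. In $g_k$ only indices $j,k-j\in\{1,\dots,k-1\}$ appear. Thus $g_k(\cdot,t)$ depends only on $\lambda_t(1,\cdot),\dots,\lambda_t(k-1,\cdot)$, and $g_1\equiv0$. Moreover $g_k(x,\cdot)$ is continuous and locally integrable on $[0,T_1)$. Indeed, it is a finite sum over $j$ of series in $y$ bounded by $\sup G\cdot k^2\lambda_t(j,x)\sum_y\lambda_t(k-j,y)$, and $\sum_y\lambda_t(k-j,y)\le M_1(t)\le 1$. So the $y$-series converges uniformly, and continuity follows from the continuity of each $\lambda_t(j,y)$ in $t$ given by the solution framework of \cite{norris-cluster-coag}.

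For each fixed $(k,x)$, \eqref{eq:triangular-thm} is then a scalar linear ODE with constant coefficient $k\,p(x)$ and a known continuous forcing term. Multiplying by the integrating factor $e^{ktp(x)}$ gives $\frac{d}{dt}\bigl(e^{ktp(x)}\lambda_t(k,x)\bigr)=e^{ktp(x)}g_k(x,t)$. Integrating from $0$ to $t$ yields \eqref{lambdak}. Uniqueness of the scalar linear ODE shows that this formula is the solution, and induction on $k$ makes it a genuine recursion.

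The main obstacle is justifying the interchange of summations and the use of $m_1(y,t)=\lambda_0(1,y)$ in the loss term simultaneously for all $y$, possibly infinitely many. This is where \hyperlink{ass:A1}{(A1)} enters, through Proposition~\ref{prop:moment-equations}. I would simply invoke that proposition and check the nonnegativity and boundedness needed for Tonelli's theorem, rather than redo the moment computation.
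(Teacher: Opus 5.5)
Your proposal is correct and follows essentially the paper's route: use local mass conservation $m_1(y,t)=\lambda_0(1,y)$ from Proposition~\ref{prop:moment-equations} to turn the loss term into $k\,p(x)\lambda_t(k,x)$, rewrite the gain term as $g_k$, and solve the resulting linear ODE with the integrating factor $e^{ktp(x)}$. The paper drops the $\lambda_0(k,x)$ term using $\lambda_0(k,x)=0$ for $k\ge2$, whereas you keep it. Your continuity check on $g_k$ goes beyond the paper, but uniform convergence of the $y$-series should rest on the termwise, $t$-independent bound $\lambda_t(m,y)\le m_1(y,t)=\lambda_0(1,y)$ (Weierstrass M-test), not on a bound for the whole sum.
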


Proposition~\ref{prop:moment-equations} and Theorem~\ref{thm:spatial-multiplicative}
implicitly describe the evolution of $\lambda_t(i,x)$ and its moments but do not yet give either the value of the
gelation time or the mechanism for gelation onset.  A first observation is that the right-hand side of~\eqref{eq:M2-global} is strictly
positive for all $t\geq 0$.  Consequently, the global second moment $M_2(t)$ is monotonically increasing, and
gelation, if it occurs, corresponds to the blow-up of $M_2(t)$ in finite time. This observation becomes more insightful when we rewrite the differential equation for $M_2(t)$ in terms of normalised local second moments.
\begin{thm}[Gelation time]
\label{thm:riccati-general}
Assume Assumption~\ref{ass:moment-justification}.
For every $t$ such that $M_2(t)\in(0,\infty)$, define normalised local second moment
\begin{equation}\label{eq:pt}
p_t(x)=\frac{m_2(x,t)}{M_2(t)}.
\end{equation}
Then $M_2(t)$ satisfies 
\begin{equation}
\label{eq:Riccati-general}
\frac{d}{dt}M_2(t)=\alpha(t)M_2(t)^2
\end{equation}
where
\begin{equation}\label{eq:alfa}
\alpha(t)=\sum_{x,y\in\mathcal S} G(x,y)p_t(x)p_t(y).
\end{equation}
Therefore the gelation time is given by
$
t_{\mathrm{gel}}
=
\inf
\Big\{
t>0:\int_0^t\alpha(s)ds=M_2(0)^{-1}
\Big\}.
$
\end{thm}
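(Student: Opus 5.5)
The plan is to take the global second-moment equation \eqref{eq:M2-global} from Proposition~\ref{prop:moment-equations} as the starting point. Assumption~\hyperlink{ass:A1}{(A1)} gives conservation of $m_1$ and the local equation \eqref{eq:m2-local-prop}. Assumption~\hyperlink{ass:A2}{(A2)} is what allows us to sum \eqref{eq:m2-local-prop} over $x$ and differentiate term by term: the uniform convergence of $\sum_y G(x,y)m_2(x,t)m_2(y,t)$ justifies exchanging $\frac{d}{dt}$ with the sum over $x$, and it guarantees that the double series on the right-hand side of \eqref{eq:M2-global} is finite. At every $t$ with $M_2(t)\in(0,\infty)$ we then substitute $m_2(x,t)=p_t(x)M_2(t)$ into \eqref{eq:M2-global}. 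Since $M_2$ factors out of the double sum, this gives \eqref{eq:Riccati-general} with $\alpha$ as in \eqref{eq:alfa}. Here $\alpha(t)$ is finite, and it is strictly positive because $G>0$ and $p_t$ is a probability vector.

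Next we integrate the Riccati equation. Since $M_2(0)=\sum_x\lambda_0(1,x)=1>0$ and $M_2$ is nondecreasing, $M_2(t)>0$ on the whole existence interval. So $u(t):=1/M_2(t)$ is well defined and satisfies $u'(t)=-\alpha(t)$, which gives
\begin{equation}
\frac{1}{M_2(t)}=\frac{1}{M_2(0)}-\int_0^t\alpha(s)\,ds
\end{equation}
for all $t<t_{\mathrm{gel}}$. This uses that $M_2$ is absolutely continuous and $\alpha$ is locally integrable there. Both follow because $\alpha(t)\le \sup G$ is bounded, since $G$ is uniformly bounded and $\sum_{x,y}p_t(x)p_t(y)=1$.

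Finally we identify $t_{\mathrm{gel}}$. Set $A(t)=\int_0^t\alpha$, which is continuous and strictly increasing. Let $t^*=\inf\{t>0:A(t)=M_2(0)^{-1}\}$. For $t<\min(t^*,t_{\mathrm{gel}})$ the identity above gives $M_2(t)=(M_2(0)^{-1}-A(t))^{-1}$. We then argue in both directions. If $t_{\mathrm{gel}}<t^*$, the right-hand side stays bounded on $[0,t_{\mathrm{gel}}]$, which contradicts Norris' result that $M_2\to\infty$ as $t\to t_{\mathrm{gel}}$ \cite[Theorem~2.2]{norris-cluster-coag}. If $t^*<t_{\mathrm{gel}}$, then $M_2(t)\to\infty$ as $t\uparrow t^*$, which contradicts the finiteness of $M_2$ before $t_{\mathrm{gel}}$. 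Hence $t_{\mathrm{gel}}=t^*$. One also has to check that $t^*<\infty$ whenever $t_{\mathrm{gel}}<\infty$, which is consistent with the above. In the opposite case, the formula should be read as $t_{\mathrm{gel}}=\infty$ when the level $M_2(0)^{-1}$ is never reached.

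The main obstacle is that $\alpha$ is defined through the solution itself, so it is only known on $[0,t_{\mathrm{gel}})$ and the infimum must be interpreted on that interval. The argument has to ensure that $A(t)\uparrow M_2(0)^{-1}$ exactly as $t\uparrow t_{\mathrm{gel}}$. This follows from the displayed identity together with the blow-up of $M_2$. A secondary point is that Assumption~\ref{ass:moment-justification} only holds on $[0,T_1)$. To cover the whole interval we would take $T_1$ arbitrarily close to $t_{\mathrm{gel}}$, that is, assume the conditions hold on every compact subinterval of $[0,t_{\mathrm{gel}})$. The uniform-convergence hypotheses are then needed only locally, to justify term-by-term differentiation.
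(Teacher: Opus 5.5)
Your proposal is correct and follows the same route as the paper: substitute $m_2(x,t)=M_2(t)p_t(x)$ into \eqref{eq:M2-global} to get the Riccati equation, integrate $\frac{d}{dt}\bigl(1/M_2(t)\bigr)=-\alpha(t)$, and read off $t_{\mathrm{gel}}$ as the time at which $M_2(0)^{-1}-\int_0^t\alpha(s)\,ds$ vanishes. Your identification step is more careful than the paper's one-line conclusion: you invoke Norris' blow-up result to rule out explosion while that quantity is still positive, you note that the level $M_2(0)^{-1}$ is reached only in the limit $t\uparrow t_{\mathrm{gel}}$, and you note that the assumption must hold up to $t_{\mathrm{gel}}$; the one slip is your aside that (A2) alone justifies exchanging $\frac{d}{dt}$ with $\sum_x$ and makes the double sum finite, which it does not, since it is a per-site condition (finiteness actually follows from $\sup G<\infty$ and $M_2(t)<\infty$), but this is harmless because you take \eqref{eq:M2-global} directly from Proposition~\ref{prop:moment-equations}.
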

Since $m_2(x,t)\ge 0$ for all $x\in\mathcal S$, the normalised quantity
$p_t(x)=m_2(x,t)/M_2(t)$ defines a probability measure on $\mathcal S$.
Theorem~\ref{thm:riccati-general} then shows that $M_2(t)$ satisfies a
Riccati-type differential equation whose rate depends on how the second
moment is distributed across the sites of $\mathcal S$ through the weights
$p_t(x)$.   Because $\mathcal S$ is countable, this rate can be bounded above and below
using the supremum and infimum of the spatial kernel $G$, which in turn
yields explicit upper and lower bounds on the gelation time.
\begin{cor}[Bounds on $t_{\rm gel}$]
\label{cor:sharp-bounds} Assume
Assumption~\ref{ass:moment-justification}. Then, for all $t$ such that
$M_2(t)\in(0,\infty)$,
\begin{equation}\label{eq:alpha_bound}
\inf_{x,y\in\mathcal S} G(x,y)\le \alpha(t)\le \sup_{x,y\in\mathcal S} G(x,y),
\end{equation}
and consequently,
\begin{equation}\label{bounds_Tgel}
\big(\sup_{x,y\in\mathcal S} G(x,y)M_2(0)\big)^{-1} \le t_{\mathrm{gel}} \le \big(\inf_{x,y\in\mathcal S} G(x,y)M_2(0)\big)^{-1},
\end{equation}
with the convention that the upper bound is $+\infty$ if
$\inf_{x,y\in\mathcal S} G(x,y)=0$. 
\end{cor}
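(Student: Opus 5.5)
The proof has two parts: first the two-sided bound \eqref{eq:alpha_bound} on $\alpha(t)$, then the bounds on $t_{\mathrm{gel}}$ obtained from Theorem~\ref{thm:riccati-general}.

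\textbf{Bound on $\alpha(t)$.} Fix $t$ with $M_2(t)\in(0,\infty)$. Since $m_2(x,t)\ge0$ and $\sum_x m_2(x,t)=M_2(t)$, the weights $p_t(x)$ form a probability measure on $\mathcal S$. Hence the product weights $p_t(x)p_t(y)$ are nonnegative and sum to $1$ over $\mathcal S\times\mathcal S$. The double series \eqref{eq:alfa} has nonnegative terms, so it can be rearranged freely. Moreover $G$ is uniformly bounded, so the series converges. It follows that $\alpha(t)$ is an average of the values $G(x,y)$ with respect to a probability measure, and therefore lies between $\inf G$ and $\sup G$. This gives \eqref{eq:alpha_bound}.

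\textbf{Bounds on $t_{\mathrm{gel}}$.} On the interval where $M_2$ is finite, $M_2(t)\ge M_2(0)>0$, because the right-hand side of \eqref{eq:M2-global} is nonnegative. So I may divide \eqref{eq:Riccati-general} by $M_2^2$, which gives $\frac{d}{dt}\bigl(-1/M_2(t)\bigr)=\alpha(t)$. Integrating,
\[
\frac{1}{M_2(t)}=\frac{1}{M_2(0)}-\int_0^t\alpha(s)\,ds .
\]
This is exactly the content of the gelation-time formula in Theorem~\ref{thm:riccati-general}, which I will invoke directly. Write $A(t)=\int_0^t\alpha(s)\,ds$ and $\underline G=\inf G$, $\overline G=\sup G$. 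By \eqref{eq:alpha_bound}, $\underline G\,t\le A(t)\le \overline G\,t$ for every $t<t_{\mathrm{gel}}$.

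For the \emph{lower bound}, suppose $t_{\mathrm{gel}}<(\overline G M_2(0))^{-1}$. Then for every $t<t_{\mathrm{gel}}$ we have $A(t)\le \overline G\,t_{\mathrm{gel}}<M_2(0)^{-1}$. So $1/M_2(t)$ stays bounded below by a positive constant as $t\uparrow t_{\mathrm{gel}}$, and $M_2$ stays bounded. This contradicts the fact that $M_2$ diverges at $t_{\mathrm{gel}}$, which follows from Norris's theorem quoted earlier, or alternatively from the definition \eqref{eq:def:tg} together with local existence of the solution.

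For the \emph{upper bound}, assume $\underline G>0$; otherwise there is nothing to prove. If $M_2$ stayed finite up to time $t^*=(\underline G M_2(0))^{-1}$, then $A(t^*)\ge M_2(0)^{-1}$. This would force $1/M_2\le 0$ at or before $t^*$, which is impossible. Hence $t_{\mathrm{gel}}\le t^*$. Equivalently, the first time at which $A$ reaches $M_2(0)^{-1}$ lies in $[(\overline G M_2(0))^{-1},(\underline G M_2(0))^{-1}]$, by the comparison $\underline G\,t\le A(t)\le\overline G\,t$.

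\textbf{Main obstacle.} The delicate point is not the algebra but the justification: the identity \eqref{eq:Riccati-general} is only known for $t$ in the window where Assumption~\ref{ass:moment-justification} holds. I would handle this by working only with $t<t_{\mathrm{gel}}$, where all quantities are finite, and taking the limit $t\uparrow t_{\mathrm{gel}}$. This is the same justification that Theorem~\ref{thm:riccati-general} already relies on, and I would cite it.
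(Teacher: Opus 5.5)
Your proof is correct and follows essentially the same route as the paper. You bound $\alpha(t)$ as a convex combination of the values $G(x,y)$ with weights $p_t(x)p_t(y)$, and you obtain the bounds on $t_{\mathrm{gel}}$ by integrating $\frac{d}{dt}\bigl(1/M_2(t)\bigr)=-\alpha(t)$ and comparing $\int_0^t\alpha(s)\,ds$ with $\underline G\,t$ and $\overline G\,t$, reading the upper bound off the hitting-time characterisation in Theorem~\ref{thm:riccati-general}. If anything, your lower-bound step is more careful than the paper's, because you explicitly use the divergence of $M_2$ at $t_{\mathrm{gel}}$ to turn the positive lower bound on $1/M_2$ into a contradiction.
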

In particular, it can be shown that if a non-constant spatial kernel is uniformly bounded by $1$, it can only postpone gelation comparing to the non-spatial case:
\begin{cor}%Bounded kernels do not accelerate gelation
\label{cor:normalized-kernel}
Assume the hypotheses of Theorem~\ref{thm:riccati-general}, and suppose that
$$
\sup_{x,y\in\mathcal S} G(x,y)\le 1.
$$
Then
$
t_{\mathrm{gel}}
\ge
\frac{1}{M_2(0)}.
$
That is, the gelation does not occur earlier than in the non-spatial case.
\end{cor}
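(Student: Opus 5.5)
This is a direct consequence of Corollary~\ref{cor:sharp-bounds}, and the plan is to read it off from the lower bound in \eqref{bounds_Tgel}. The hypotheses of Theorem~\ref{thm:riccati-general} are Assumption~\ref{ass:moment-justification}, so Corollary~\ref{cor:sharp-bounds} applies. Its lower bound gives
\[
t_{\mathrm{gel}}\ \ge\ \big(\sup_{x,y\in\mathcal S}G(x,y)\,M_2(0)\big)^{-1}.
\]
Since $0<\sup_{x,y}G(x,y)\le 1$, with positivity coming from $G>0$, we get $\sup G\, M_2(0)\le M_2(0)$. Inverting reverses the inequality and yields $t_{\mathrm{gel}}\ge 1/M_2(0)$.

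To make the argument self-contained at the level of the Riccati equation, I would also spell out the mechanism. By \eqref{eq:alpha_bound}, $\alpha(s)\le \sup G\le 1$ for every $s$ with $M_2(s)\in(0,\infty)$. Hence $\int_0^t\alpha(s)\,ds\le t$. The characterisation of $t_{\mathrm{gel}}$ in Theorem~\ref{thm:riccati-general} is the first time at which $\int_0^t\alpha=M_2(0)^{-1}$. Since the integral is at most $t$, this time cannot be earlier than $M_2(0)^{-1}$.

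Equivalently, one can integrate \eqref{eq:Riccati-general} as $\frac{d}{dt}\bigl(-1/M_2(t)\bigr)=\alpha(t)$, which gives
\[
\frac{1}{M_2(t)}=\frac{1}{M_2(0)}-\int_0^t\alpha(s)\,ds\ \ge\ \frac{1}{M_2(0)}-t.
\]
This stays positive for $t<1/M_2(0)$, so $M_2$ remains finite there.

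Finally, I would note the comparison with the non-spatial case. When $\mathcal S$ is a single site and $G\equiv1$, we have $\alpha\equiv1$, and the classical gelation time is exactly $1/M_2(0)$. This justifies the interpretation stated in the corollary.

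The only point needing care is that Theorem~\ref{thm:riccati-general} and the bound on $\alpha$ are stated only for times where $M_2(t)\in(0,\infty)$. This is handled as follows. $M_2(0)=\sum_x\lambda_0(1,x)=1>0$ by \eqref{IC}, and $M_2$ is nondecreasing by \eqref{eq:M2-global}, so $M_2(t)>0$ throughout. On $[0,t_{\mathrm{gel}})$, $M_2$ is finite by definition. Therefore the integrated inequality above holds on the whole interval $[0,t_{\mathrm{gel}})$. If $t_{\mathrm{gel}}<1/M_2(0)$, then $1/M_2(t)\ge 1/M_2(0)-t_{\mathrm{gel}}>0$ for all $t<t_{\mathrm{gel}}$, so $M_2$ would stay bounded up to $t_{\mathrm{gel}}$. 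This contradicts the blow-up of $M_2$ at $t_{\mathrm{gel}}$ (Norris, \cite[Theorem~2.2]{norris-cluster-coag}). I expect no step to be a genuine obstacle.
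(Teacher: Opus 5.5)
Your proposal is correct and matches the paper. The paper gives no separate proof of this corollary and intends exactly your main argument: read off $t_{\mathrm{gel}}\ge(\sup G\,M_2(0))^{-1}$ from Corollary~\ref{cor:sharp-bounds} and use $0<\sup G\le 1$. Your Riccati-level argument reproduces the paper's proof of that corollary, and like the paper it tacitly uses the moment identities on all of $[0,t_{\mathrm{gel}})$, i.e.\ it assumes $T_1\ge t_{\mathrm{gel}}$.
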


On the one hand, the lower and upper bounds in~\eqref{bounds_Tgel} are sharp in
the sense that equality is attained simultaneously if and only if
$G(x,y)=C$ for all $x,y\in\mathcal S$, for some constant $C>0$.  
In this case $\alpha(t)\equiv C$ for all $t$, and the differential equation
\[
\frac{d}{dt}M_2(t)=C\,M_2(t)^2
\]
can be solved explicitly, yielding $t_{\mathrm{gel}}=1/(C\,M_2(0))$, so that
the two bounds in~\eqref{bounds_Tgel} coincide.  On the other hand, the bounds in~\eqref{bounds_Tgel} recover the classical
estimates obtained by Norris for approximately multiplicative kernels \cite[Thm.~2.2(ii)]{norris-cluster-coag},
restricted here to our spatial setting in which the coagulation rate is
bounded above and below by multiples of the product of masses.  
In our formulation, the quantities $\inf G$ and $\sup G$ play the role of
these constants.   The absence of further refinement is due to the fact that even though our coagulation kernel $K$ is specialised to be multiplicative in size,  its spatial part $G$ is considered in full generality.

By additionally requiring the spatial kernel $G$ to be symmetric and positive
semidefinite in an appropriate sense, one can analyse the evolution of the
probability measure $p_t(x)$ and show that the rate in the Riccati-type
equation~\eqref{eq:Riccati-general} is nondecreasing.  
This monotonicity property allows us to improve the bounds on the gelation
time by incorporating information about the initial spatial distribution of
the second moment, together with a more nuanced characteristic of the kernel
than its supremum.  
To this end, we further specialise the spatial kernel $G$ to admit a Mercer
representation.

\begin{defn}[Mercer spatial kernel]\label{def:mercer}
The spatial kernel $G$ is of Mercer type if there exist a
real Hilbert space $(\mathcal H,\langle\cdot,\cdot\rangle)$ and a feature map
$\Phi:\mathcal S\to\mathcal H$ such that
\begin{equation}\label{eq:mercer}
G(x,y)=\bigl\langle \Phi(x),\Phi(y)\bigr\rangle_{\mathcal H},
\qquad x,y\in\mathcal S.
\end{equation}
In particular, $G$ is then symmetric and positive semidefinite.
We use notation
\[
(Gp_t)(x):=\sum_{y\in\mathcal S} G(x,y)p_t(y).
\]
\end{defn}
\begin{rem}[Geometric interpretation]
The Mercer representation also yields a natural geometric interpretation of the effective interaction coefficient $\alpha(t)$ in terms of a barycenter in feature space. Indeed,
\[
\alpha(t)\overset{\eqref{eq:alfa}}{=}
\sum_{x,y\in\mathcal S}G(x,y)p_t(x)p_t(y)
=
\Bigl\|
\sum_{x\in\mathcal S}p_t(x)\Phi(x)
\Bigr\|_{\mathcal H}^2,
\]
so that \(\alpha(t)\) is the squared norm of the barycentre associated with the second-moment profile \(p_t\). Since by definition \eqref{eq:pt} 
\(p_t(x)\propto m_2(x,t)\),
this quantity becomes large at sites containing large clusters. Thus the growth of the second moment depends not only on the interaction strengths of the kernel \(G(x,y)\), but also on the spatial sites containing most of the large clusters. 
\end{rem}

\begin{thm}[Gelation Bound for Mercer kernels]\label{thm:replicator-mercer}
 Let $G$ be Mercer, in particular symmetric and bounded, and consider rescaled time 
 \begin{equation}\label{eq:tau-def}
\tau(t):=\int_0^t M_2(s) {\rm d}s.
\end{equation}
  Then, under Assumption~\ref{ass:moment-justification},  for every $t$ such that $M_2(t)\in(0,\infty)$, the measure
$p_\tau(x)$ satisfies the replicator equation 
\begin{equation}\label{eq:replicator-mercer}
\frac{d}{d\tau}p_\tau(x)
=
p_\tau(x)\Bigl((Gp_\tau)(x)-\alpha(\tau)\Bigr),
\qquad x\in\mathcal S,
\end{equation}
where $\tau\mapsto \alpha(\tau)$, as defined in Theorem \ref{thm:riccati-general}, is nondecreasing. Moreover,
the gelation time $t_{\mathrm{gel}}$ satisfies
\begin{equation}\label{eq:bounds-mercer}
\frac{1}{\alpha_{\max}M_2(0)}
\leq
t_{\mathrm{gel}}
\leq
\frac{1}{\alpha(0)M_2(0)},
\end{equation}
where
\begin{equation}\label{eq:alpha-max}
\alpha_{\max}
:=
\sup_{\substack{q:\mathcal S\to[0,1]\\ \sum_x q(x)=1}}
\sum_{x,y\in\mathcal S} G(x,y)q(x)q(y)<\infty,
\end{equation}
and we assume the convention that the upper bound is $+\infty$ if $\alpha(0)=0$.
\end{thm}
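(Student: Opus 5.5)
We derive the replicator equation directly from Proposition~\ref{prop:moment-equations}. Differentiate $p_t(x)=m_2(x,t)/M_2(t)$ in $t$, using \eqref{eq:m2-local-prop} and \eqref{eq:Riccati-general}:
\[
\frac{d}{dt}p_t(x)=\frac{m_2(x,t)\sum_y G(x,y)m_2(y,t)}{M_2(t)}-\frac{m_2(x,t)\,\alpha(t)M_2(t)^2}{M_2(t)^2}
=M_2(t)\,p_t(x)\bigl((Gp_t)(x)-\alpha(t)\bigr).
\]
Assumption~\ref{ass:moment-justification} justifies differentiating the series termwise. Since $d\tau/dt=M_2(t)>0$, the map $t\mapsto\tau$ is a strictly increasing $C^1$ bijection onto its image. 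The chain rule then gives \eqref{eq:replicator-mercer}. Symmetry of $G$ is not needed for this step.

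For monotonicity of $\alpha$, differentiate $\alpha(\tau)=\sum_{x,y}G(x,y)p_\tau(x)p_\tau(y)$ and use symmetry of $G$:
\[
\frac{d\alpha}{d\tau}=2\sum_x (Gp_\tau)(x)\,\frac{dp_\tau(x)}{d\tau}=2\sum_x p_\tau(x)\bigl((Gp_\tau)(x)\bigr)^2-2\alpha(\tau)^2 .
\]
Here we use $\sum_x p_\tau(x)(Gp_\tau)(x)=\alpha$. Write $f=Gp_\tau$. The right-hand side equals $2\operatorname{Var}_{p_\tau}(f)\ge0$, since $\mathbb E_{p_\tau}f=\alpha$. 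This is Fisher's fundamental theorem for replicator dynamics.

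The exchange of sum and derivative needs $\sum_x p(x)f(x)^2<\infty$. This follows because $G$ is bounded, so $0\le f\le\sup G$. Convergence is uniform on compact subintervals by (A2), which lets us differentiate termwise. Positive semidefiniteness does not actually enter monotonicity itself; it is used to guarantee $\alpha\ge0$ and to give $\alpha_{\max}$ its meaning. Note also that for a Mercer kernel,
\[
\alpha_{\max}=\sup_q\Bigl\|\sum_x q(x)\Phi(x)\Bigr\|^2\le\sup_x\|\Phi(x)\|^2=\sup_x G(x,x)<\infty,
\]
by convexity of the norm squared. This proves the finiteness claimed in \eqref{eq:alpha-max}.

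For the bounds, return to the formula of Theorem~\ref{thm:riccati-general}: $t_{\rm gel}$ is the first $t$ with $\int_0^t\alpha(s)\,ds=M_2(0)^{-1}$. Since $\alpha(t)=\alpha(\tau(t))$ and $\tau$ is increasing in $t$, the function $t\mapsto\alpha(t)$ is nondecreasing. Hence $\alpha(0)\le\alpha(t)\le\alpha_{\max}$, the upper bound holding because $p_t$ is a probability vector. It follows that $\alpha(0)t\le\int_0^t\alpha\le\alpha_{\max}t$. At $t=t_{\rm gel}$ this gives $\alpha_{\max}t_{\rm gel}\ge M_2(0)^{-1}$. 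If $t_{\rm gel}=\infty$, the lower bound in \eqref{eq:bounds-mercer} is trivial. Conversely, if $\alpha(0)>0$, the integral reaches $M_2(0)^{-1}$ by time $1/(\alpha(0)M_2(0))$, which gives the upper bound.

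The main obstacle is purely technical. We must justify, on every interval where $M_2<\infty$, the termwise differentiation of the countable sums defining $p_t$ and $\alpha$. We must also check that $\alpha$ is continuous, so that the Riccati integration of Theorem~\ref{thm:riccati-general} applies up to $t_{\rm gel}$. We would handle this with the uniform convergence in (A1)–(A2) together with boundedness of $G$, by a dominated/Weierstrass M-test argument.
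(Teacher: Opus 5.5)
Your proposal is correct and follows essentially the same route as the paper. You differentiate $p_t=m_2/M_2$ by the quotient rule using Proposition~\ref{prop:moment-equations} and \eqref{eq:Riccati-general}, change time via $d\tau/dt=M_2$, and use the symmetry-only identity $d\alpha/d\tau=2\,\mathrm{Var}_{p_\tau}(Gp_\tau)\ge 0$ with the sandwich $\alpha(0)\le\alpha(t)\le\alpha_{\max}$ in the Riccati integration to get \eqref{eq:bounds-mercer}; the remaining differences are cosmetic. You bound $\alpha_{\max}$ by $\sup_x G(x,x)$ through the feature map rather than by $\sup|G|$ directly, and you read the bounds off the integral characterisation of $t_{\rm gel}$ rather than integrating $1/M_2$.
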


Thus the bounds in~\eqref{eq:bounds-mercer} strictly improve the crude kernel
bounds~\eqref{bounds_Tgel}, except in the degenerate case where
$\alpha(0)$ and $\alpha_{\max}$ coincide with the extremal values of $G$.
Indeed,
\[
\inf_{x,y}G(x,y)\le \alpha(0)\le\alpha_{\max}\le\sup_{x,y}G(x,y),
\]
and therefore
\[
\frac{1}{\alpha_{\max}M_2(0)}
\;\ge\;
\frac{1}{\sup_{x,y}G(x,y)\,M_2(0)},
\qquad
\frac{1}{\alpha(0)M_2(0)}
\;\le\;
\frac{1}{\inf_{x,y}G(x,y)\,M_2(0)}.
\]
It is also instructive to interpret the value of $\alpha_{\max}$.  
Let $X$ be a random variable taking values in $\mathcal S$ with
$\mathbb P(X=x)=q(x)$.  
Then $\mathbb E[\Phi(X)]$ is the \emph{mean feature vector}, and
\[
V(q):= \sum_{x,y\in\mathcal S} G(x,y)\,q(x)\,q(y)
	=\sum_{x,y\in\mathcal S}\langle \Phi(x),\Phi(y)\rangle_{\mathcal H}\,q(x)\,q(y)
         =\bigl\|\mathbb E[\Phi(X)]\bigr\|_{\mathcal H}^2.
\]
Consequently,
\[
\alpha_{\max}=\sup_{\substack{q:\mathcal S\to[0,1]\\ \sum_x q(x)=1}}V(q)=
\left(
\sup_{q}
\bigl\|\mathbb E[\Phi(X)]\bigr\|_{\mathcal H}
\right)^2,
\]
that is, $\alpha_{\max}$ is the largest squared norm of a mean feature vector
attainable by any probability distribution on~$\mathcal S$.

Finally, several observations about the evolution of the second-moment
distribution $p_t(x)$ follow from the replicator equation
\eqref{eq:replicator-mercer}.  
An initial distribution $p_0(x)$ is stationary when it is a fixed
point of the ODE~\eqref{eq:replicator-mercer}, in which case
$\alpha(t)=\alpha(0)$ for all $t\in[0,T_1]$.  
For such initial data, the gelation time coincides with the upper bound
in~\eqref{eq:bounds-mercer}.

\begin{cor}\label{cor:fixed-point-gelation}
Assume conditions of Theorem~\ref{thm:replicator-mercer} and suppose additionally that for all $x\in\mathcal S$, $p_0(x)$ is such that
$$p_0(x)(Gp_0(x)-\alpha(0))=0.$$ Then for all $t$ such that $M_2(t)<\infty$, $p_t(x)\equiv p_0(x)$, $\alpha(t)\equiv\alpha(0)$ and if $\alpha(0)>0$, the gelation time is given  by
\[
t_{\mathrm{gel}}=\frac{1}{\alpha(0)M_2(0)},
\]
while if $\alpha(0)=0$ there is no gelation (i.e.\ $M_2(t)$ remains finite for all $t$).

In particular, assume $\mathcal S$ is finite and $p_0(x)=\frac{1}{|\mathcal S|},$ for $x\in\mathcal S.$ Then $p_0$ is a fixed point of the replicator equation if and only if $G$ is row‑stochastic, that is
\[
\sum_{y\in\mathcal S}G(x,y)=\sum_{y\in\mathcal S}G(x',y) \qquad\text{for all }x,x'\in\mathcal S.
\]
\end{cor}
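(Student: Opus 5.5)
The proof has two parts: a stationarity argument for the replicator equation, and an algebraic check of the fixed-point condition for the uniform profile.

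\emph{Stationarity.} By Theorem~\ref{thm:replicator-mercer}, $p_\tau$ solves the replicator equation~\eqref{eq:replicator-mercer} in the rescaled time $\tau$. The hypothesis $p_0(x)\bigl((Gp_0)(x)-\alpha(0)\bigr)=0$ says exactly that the vector field of~\eqref{eq:replicator-mercer} vanishes at $p_0$, so the constant path $q_\tau\equiv p_0$ is a solution.

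To conclude $p_\tau\equiv p_0$ we need uniqueness for~\eqref{eq:replicator-mercer}, and this is the main obstacle. I would view the equation as an ODE on the simplex inside $\ell^1(\mathcal S)$. Since $G$ is bounded, the map $p\mapsto (Gp)(x)$ satisfies $|(Gp)(x)-(Gq)(x)|\le \sup G\,\|p-q\|_{\ell^1}$, and $|\alpha(p)-\alpha(q)|\le 2\sup G\,\|p-q\|_{\ell^1}$. Hence the field $F(p)(x)=p(x)\bigl((Gp)(x)-\alpha(p)\bigr)$ is locally Lipschitz in $\ell^1$. Picard--Lindel\"of in this Banach space then gives uniqueness.

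One should also check that $p_\tau$ is the $\ell^1$-valued solution and not merely a coordinatewise one. Coordinatewise differentiability, together with the uniform convergence supplied by (A2), should suffice to identify the two. An alternative that avoids Banach-space subtleties is a Gr\"onwall estimate on $\sum_x|p_\tau(x)-p_0(x)|$.

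Since $t\mapsto\tau(t)$ is strictly increasing while $M_2>0$, the conclusion $p_\tau\equiv p_0$ transfers to $p_t\equiv p_0$, and therefore $\alpha(t)\equiv\alpha(0)$ by~\eqref{eq:alfa}.

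\emph{Gelation time.} Substituting $\alpha(t)\equiv\alpha(0)$ into~\eqref{eq:Riccati-general} gives $\dot M_2=\alpha(0)M_2^2$, whose solution is
\[
M_2(t)=\frac{M_2(0)}{1-\alpha(0)M_2(0)t}.
\]
If $\alpha(0)>0$, this blows up exactly at $t=1/(\alpha(0)M_2(0))$. Alternatively, Theorem~\ref{thm:riccati-general} gives the same value, since $\int_0^t\alpha(s)\,ds=\alpha(0)t$. If $\alpha(0)=0$, then $M_2$ is constant and there is no gelation.

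\emph{Uniform initial profile.} Take $\mathcal S$ finite and $p_0\equiv 1/|\mathcal S|$. Then $p_0(x)>0$ for every $x$, so the fixed-point condition is equivalent to $(Gp_0)(x)=\alpha(0)$ for all $x$. Here $(Gp_0)(x)=|\mathcal S|^{-1}\sum_y G(x,y)$, so this requires the row sums to be constant in $x$.

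Conversely, suppose all row sums equal $r$. Then
\[
\alpha(0)=\sum_x p_0(x)(Gp_0)(x)=\frac{r}{|\mathcal S|}=(Gp_0)(x)
\]
for every $x$, so $p_0$ is a fixed point. This gives the stated equivalence with constant row sums, which is what the corollary calls row-stochastic.
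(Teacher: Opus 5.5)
Your proof is correct and follows the paper's line. The paper gives no separate proof; it justifies the corollary by remarking that a fixed point of~\eqref{eq:replicator-mercer} is stationary, so $\alpha(t)\equiv\alpha(0)$ and~\eqref{eq:Riccati-general} integrates explicitly to the upper bound in~\eqref{eq:bounds-mercer}, and the uniform-profile equivalence is the direct row-sum computation you give. What you add is the uniqueness step (the replicator field is Lipschitz on the simplex in $\ell^1$, or more cleanly a Gr\"onwall bound on $\sum_x|p_\tau(x)-p_0(x)|$ via the coordinatewise integral form and Tonelli), which the paper takes for granted and which is needed to pass from ``the field vanishes at $p_0$'' to $p_\tau\equiv p_0$ when $\mathcal S$ is countably infinite.
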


In the non-stationary regime, the replicator dynamics modifies the profile \(p_t(x)\) according to
Eq.~\eqref{eq:replicator-mercer}.
Now, recall from \eqref{eq:pt} that \(p_t(x)\) measures the fraction of the total second moment located at the site \(x\), and therefore describes where the large clusters are concentrated. The quantity \((Gp_t)(x)\) can then be interpreted as the average interaction strength between the site \(x\) and the sites currently carrying most of the large clusters. Sites with \((Gp_t)(x)>\alpha(t)\) gain mass in the profile \(p_t\), while sites with \((Gp_t)(x)<\alpha(t)\) lose mass. 
Consequently, the replicator dynamics shifts mass in $p_t(x)$
towards states that yield larger values of $\alpha(t)$, and the trajectory $p_\tau$ converges, as $\tau\to\infty$, to one of the stationary KKT (Karush-Kuhn-Tucker) points of $V(q)$ on the simplex $\sum_{x\in\mathcal S} q(x)=1$ \cite{hofbauer1998}. 
Depending on the support of these KKT points, gelation may either occur simultaneously in the whole $\mathcal S$ or localised at specific sites. 

Finally, recall that the rescaled time variable is defined by
$
\tau(t)=\int_0^t M_2(s)\,ds.$
Hence \(\tau(t)\to\infty\)  when \(M_2(t)\to\infty\). Therefore, even when the gelation time \(t_{\mathrm{gel}}\) is finite, the asymptotic regime \(\tau\to\infty\) corresponds, in the original time scale, to approaching the gelation time, \(t\to t_{\mathrm{gel}}\). 

\subsection{Examples}
In what follows we discuss several special cases of Mercer kernels, using the
notation introduced above throughout the examples. For convenience, we recall the quantities introduced above. Let
\[
\mu_t:=\sum_{x\in\mathcal S} p_t(x)\Phi(x).
\]
Then,
\[
\alpha(t):=\sum_{y,x\in\mathcal S} p_t(x)p_t(y)G(x,y)
=\langle\sum_{x\in\mathcal S} p_t(x)\Phi(x),\sum_{x\in\mathcal S} p_t(x)\Phi(x) \rangle = \|\mu_t\|_{\mathcal H}^2.
\]
\[
\alpha_{\max}:=\sup_{\substack{q:\mathcal S\to[0,1]\\ \sum_x q(x)=1}}
\sum_{x,y\in\mathcal S} G_{x,y}q(x)q(y)=\sup_{x\in\mathcal S}\|\Phi(x)\|_{\mathcal H}^2,
\]
where the last equality follows from the fact that $G$ is positive semidefinite. Indeed, the associated quadratic form is convex in $q$, and therefore its supremum over the simplex of probability measures is attained at an extreme point. To connect back to the classical non-spatial setting, now expressed in the
Mercer-kernel notation, it suffices to choose
$\Phi(x):=\sqrt{C}\in\mathbb R$ for every $x\in\mathcal S$.  
Then $\alpha(t)\equiv C$ and we recover the well-known formula
$t_{\mathrm{gel}}=1/(C\,M_2(0))$.  \\

 \begin{example}[Rank-one]\label{ex:rank-one-mercer}
Let $w:\mathcal S\to(0,\infty)$ be bounded and set $G(x,y)=w(x)w(y)$.
Then $\Phi(x)=w(x)$ and $\alpha(t)=\big(\sum_x w(x)p_t(x)\big)^2.$ So that
\[
\frac{1}{(\sup_x w(x))^2 M_2(0)}\le t_{\mathrm{gel}}\le\frac{1}{\big(\sum_x w(x)p_0(x)\big)^2\,M_2(0)}.
\]
If $w$ has a unique maximiser $x_0$, then $\alpha_{\max}=w(x_0)^2$ and
the lower bound is sharp. This setting is straightforward to generalise to a linear combination of several rank one kernels.
\end{example}
\begin{example}[Finite-dimensional feature map]\label{ex:finite-dim-mercer}
Let $\psi:\mathcal S\to\mathbb R^d$ be bounded and define
$G(x,y)=\psi(x)\cdot\psi(y)$. Then $\Phi(x)=\psi(x)$ and $\alpha(t)=\|\mu_t\|_{\mathbb R^d}^2, $ for $\mu_t=\sum_x p_t(x)\psi(x).$
 Suppose further that $\psi$ attains its maximal norm at some $x_{\max}$, then $\alpha_{\max}=\|\psi(x_{\max})\|^2$ and 
\[
\left(\|\psi(x_{\max})\|^2 M_2(0)\right)^{-1}\le t_{\mathrm{gel}}\le\Big(\|\sum_x p_0(x)\psi(x)\|^2M_2(0)\Big)^{-1}.
\]

\end{example}

\begin{example}[Translation-invariant kernel]
Let $\mathcal S=\mathbb Z/L\mathbb Z$ and let
$g:\mathcal S\to\mathbb R$ be even and positive semidefinite in the sense
that its (unitary) discrete Fourier transform
\[
\hat g(k)
=
\frac{1}{\sqrt{L}}\sum_{\Delta\in\mathcal S}
g(\Delta)e^{-2\pi i k\Delta/L}
\]
is nonnegative for all $k\in\mathcal S$.
Then $G(x,y):=g(x-y)$, is a Mercer kernel with feature map
\[
\Phi(x)=L^{-1/4}\bigl(\sqrt{\hat g(k)}\,e^{2\pi i kx/L}\bigr)_{k\in\mathcal S}.
\]
For the unitary Fourier transform of $p_t(x)$ we have
\[
\alpha(t)
=
\sum_{k\in\mathcal S}
\hat g(k)\,|\widehat{p_t}(k)|^2,
\qquad
\alpha_{\max}
=
\max_{k\in\mathcal S}\hat g(k),
\]
and hence
\[
\frac{1}{M_2(0)\max_{k\in\mathcal S}\hat g(k)}
\le
t_{\mathrm{gel}}
\le
\frac{1}{M_2(0)\sum_{k\in\mathcal S}
\hat g(k)|\widehat{p_0}(k)|^2}.
\]
\end{example}

\begin{example}[Diffusion kernel]\label{ex:semigroup-mercer}
Let $(P_s)_{s>0}$ be a reversible Markov semigroup on $\mathcal S$ with
reversible measure $\pi$, so that $\pi(x)P_s(x,y)=\pi(y)P_s(y,x),$ for $ x,y\in\mathcal S.$
Define the symmetrised diffusion kernel
\[
G(x,y):=\frac{P_s(x,y)}{\sqrt{\pi(x)\pi(y)}}.
\]
Then $G$ is a symmetric positive semidefinite kernel. Let $K$ be an index set
with $|K|=|\mathcal S|$. By the spectral theorem, there exists an orthonormal basis $(\psi_k)_{k\in K}$ of $\ell^2(\mathcal S)$ and eigenvalues $\lambda_k\in[0,1]$ such that
\[
G(x,y)=\sum_{k\in K} \lambda_k^s\,\psi_k(x)\psi_k(y),
\]
so a feature map is given by
\[
\Phi(x):=\bigl(\sqrt{\lambda_k^s}\,\psi_k(x)\bigr)_{k\in K}.
\]
For a probability distribution $p_t$ on $\mathcal S$, define the spectral
coefficients
\[
\widehat{p_t}(k):=\sum_{x\in\mathcal S} p_t(x)\,\psi_k(x).
\]
Then
\[
\alpha(t)
=
\sum_{k\in K} \lambda_k^s\,|\widehat{p_t}(k)|^2
\le
\bigl(\max_{k\in K}\lambda_k^s\bigr)
\sum_{k\in K} |\widehat{p_t}(k)|^2
\le
\max_{k\in K}\lambda_k^s,
\]
and since $\sum_{k\in K}|\widehat{p_t}(k)|^2=\sum_{x\in\mathcal S}p_t(x)^2\le1$,
by Theorem~\ref{thm:replicator-mercer}, the gelation time satisfies
\[
\bigl(M_2(0)\,\max_{k\in K}\lambda_k^s\bigr)^{-1}
\le t_{\mathrm{gel}}\le
\biggl(
M_2(0)\,\sum_{k\in K}\lambda_k^s\,|\widehat{p_0}(k)|^2
\biggr)^{-1}.
\]
In this sense, the slowest diffusion mode (largest eigenvalue $\lambda_k$)
controls a universal lower bound on the gelation time, while the upper bound
is smaller when the initial distribution $p_0$ is concentrated on rapidly
decaying modes.
\end{example}

Finally, the following example illustrates that sometimes it is enough to be able to compute the norm of $\Phi$ without explicitly constructing $\Phi$, as such a feature map exists.
\begin{example}[Radial kernel on a metric space]\label{ex:radial-mercer}
Let $(\mathcal S,d)$ be a metric space and let $\kappa:[0,\infty)\to\mathbb R$ be nonincreasing and
such that $G(x,y)=\kappa(d(x,y))$ is \emph{positive semidefinite}, meaning that for every finitely supported
function $q:\mathcal S\to\mathbb R$, $\sum_{x,y\in\mathcal S} q(x)\,G(x,y)\,q(y)\;\ge\;0.$
Then by the Moore--Aronszajn theorem \cite{Aronszajn1950} there exists a reproducing kernel Hilbert space $\mathcal H$
with feature map $\Phi(x):=G(\cdot,x)$ such that
\[
G(x,y)=\langle \Phi(x),\Phi(y)\rangle_{\mathcal H}, \qquad x,y\in\mathcal S.
\]
In particular,
\[
\|\Phi(x)\|_{\mathcal H}^2 = G(x,x) = \kappa(0)
\quad\text{for all }x\in\mathcal S,
\]
and therefore the gelation time satisfies
\[
(\kappa(0)M_2(0))^{-1}
\le
t_{\mathrm{gel}}
\le
(\|\mu_0\|_{\mathcal H}^2\,M_2(0))^{-1}.
\]
If the initial second moment distribution is a Dirac mass $p_0=\delta_x$, then
$\mu_0=\Phi(x)$ and hence $\|\mu_0\|_{\mathcal H}^2=\kappa(0)$, so the upper and lower
gelation bounds coincide. The converse is true when $\kappa$ is strictly decreasing.
\end{example}

\section{Proofs}\label{sec:proofs}
\begin{proof}[Proof of Proposition~\ref{prop:moment-equations}]
Fix $x\in\mathcal S$, and consider
$
m_1(x,t)=\sum_{i\ge1} i\lambda_t(i,x)$, as defined in \eqref{moments}.
We first show that $m_1(x,t)$ is well defined and constant on $[0,T_1)$.
Multiplying Eq.~\eqref{SE_prod} by $i$ and taking absolute values, we obtain
\begin{align*}
\sum_{i\ge1} i\bigl|\frac{d}{dt} \lambda_t(i,x)\bigr|
&\le
\sum_{i\ge1}\sum_{y\in\mathcal S}\sum_{j=1}^{i-1}
j^2(i-j)G(x,y)\lambda_t(j,x)\lambda_t(i-j,y)
\\
&\qquad
+\sum_{i\ge1}\sum_{y\in\mathcal S}\sum_{j\ge1}
i^2j G(x,y)\lambda_t(i,x)\lambda_t(j,y).
\end{align*}
In the first term, set $z=i-j$. Since all summands are nonnegative,
Tonelli's theorem yields
\begin{align*}
\sum_{i\ge1}\sum_{y\in\mathcal S}\sum_{j=1}^{i-1}
j^2(i-j)G(x,y)\lambda_t(j,x)\lambda_t(i-j,y)
&=
\sum_{y\in\mathcal S} G(x,y)
\sum_{j\ge1}\sum_{z\ge1}
j^2 z\lambda_t(j,x)\lambda_t(z,y)
\\
&=
\sum_{y\in\mathcal S} G(x,y)
\Bigl(\sum_{j\ge1} j^2\lambda_t(j,x)\Bigr)
\Bigl(\sum_{z\ge1} z\lambda_t(z,y)\Bigr).
\end{align*}
The second term is exactly the same quantity:
$$
\sum_{i\ge1}\sum_{y\in\mathcal S}\sum_{j\ge1}
i^2jG(x,y)\lambda_t(i,x)\lambda_t(j,y)
=
\sum_{y\in\mathcal S} G(x,y)
\Bigl(\sum_{i\ge1} i^2\lambda_t(i,x)\Bigr)
\Bigl(\sum_{j\ge1} j\lambda_t(j,y)\Bigr).
$$
Therefore
\begin{equation}
\label{eq:abs-der-bound}
\sum_{i\ge1} i\bigl|\frac{d}{dt} \lambda_t(i,x)\bigr|
\le
2\sum_{y\in\mathcal S} G(x,y)
\Bigl(\sum_{i\ge1} i^2\lambda_t(i,x)\Bigr)
\Bigl(\sum_{j\ge1} j\lambda_t(j,y)\Bigr),
\end{equation}
and the right-hand side is finite for $t\in[0,T_1)$ by \hyperlink{ass:A1}{(A1)}. Hence the series
$\sum_{i\ge1} i\frac{d}{dt}\lambda_t(i,x)$ converges absolutely, and we may differentiate term by term:
$$
\frac{d}{dt}m_1(x,t)=\sum_{i\ge1} i\frac{d}{dt}\lambda_t(i,x).
$$
Multiplying \eqref{SE_prod} by $i$ and summing over $i\ge1$, we get
\begin{align}
\frac{d}{dt}m_1(x,t)
&=
\sum_{i\ge1}\sum_{y\in\mathcal S}\sum_{j=1}^{i-1}
j^2(i-j)G(x,y)\lambda_t(j,x)\lambda_t(i-j,y)
\notag\\
&\qquad
-
\sum_{i\ge1}\sum_{y\in\mathcal S}\sum_{j\ge1}
i^2jG(x,y)\lambda_t(i,x)\lambda_t(j,y).
\label{eq:M1-derivative}
\end{align}
As above, after the change of variables $z=i-j$ in the first term,
Tonelli's theorem gives
\begin{align*}
\sum_{i\ge1}\sum_{y\in\mathcal S}\sum_{j=1}^{i-1}
j^2(i-j)G(x,y)\lambda_t(j,x)\lambda_t(i-j,y)
&=
\sum_{y\in\mathcal S} G(x,y)
\Bigl(\sum_{j\ge1} j^2\lambda_t(j,x)\Bigr)
\Bigl(\sum_{z\ge1} z\lambda_t(z,y)\Bigr),
\end{align*}
which coincides with the second term in
\eqref{eq:M1-derivative}. Therefore
$$
\frac{d}{dt}m_1(x,t)=0
\qquad\text{for all } t\in[0,T_1).
$$
Since the initial condition is monodisperse,
\begin{equation}\label{consm1}
m_1(0,x)=\sum_{i\ge1} i\lambda_0(i,x)=\lambda_0(1,x),
\end{equation}
which gives \eqref{eq:consm1}.

We now turn to $m_2(x,t)$. Assumption~\hyperlink{ass:A2}{(A2)} states that
\[
\sum_{y\in\mathcal S} G(x,y)m_2(x,t)m_2(y,t)<\infty
\qquad\text{for all } t\in[0,T_1).
\]
Together with the regularity of solutions to \eqref{SE_prod}
(see, e.g., \cite[Theorem~2.2]{norris-cluster-coag}), this implies that, for each fixed
$x\in\mathcal S$, the series
\[
\sum_{i\ge1} i^2\frac{d}{dt}\lambda_t(i,x)
\]
is absolutely convergent on $[0,T_1)$.
Hence we may again differentiate term-by-term and rearrange the resulting
sums.
 Using
\eqref{SE_prod}, we obtain
\begin{align*}
\frac{d}{dt}m_2(x,t)
&=
\sum_{i\ge1} i^2\frac{d}{dt}\lambda_t(i,x)\\
&=
\sum_{y\in\mathcal S}\sum_{j,k\ge1}
(j+k)^2\frac{j}{j+k}jkG(x,y)\lambda_t(j,x)\lambda_t(k,y)
-
\sum_{y\in\mathcal S}\sum_{j,k\ge1}
j^3kG(x,y)\lambda_t(j,x)\lambda_t(k,y).
\end{align*}
Since $(j+k)^2\frac{j}{j+k}jk=(j+k)j^2k=j^3k+j^2k^2,$ it follows that
\begin{align*}
\frac{d}{dt}m_2(x,t)
&=
\sum_{y\in\mathcal S}\sum_{j,k\ge1}
\bigl(j^3k+j^2k^2\bigr)G(x,y)\lambda_t(j,x)\lambda_t(k,y)
-
\sum_{y\in\mathcal S}\sum_{j,k\ge1}
j^3kG(x,y)\lambda_t(j,x)\lambda_t(k,y)\\
&=
\sum_{y\in\mathcal S}\sum_{j,k\ge1}
j^2k^2G(x,y)\lambda_t(j,x)\lambda_t(k,y)\\
&=
m_2(x,t)\sum_{y\in\mathcal S} G(x,y)m_2(y,t),
\end{align*}
which proves \eqref{eq:m2-local-prop}. Summing over $x\in\mathcal S$ and using Tonelli's theorem,
since all summands are nonnegative, yields \eqref{eq:M2-global}.
\end{proof}

\begin{proof}[Proof of Theorem~\ref{thm:spatial-multiplicative}]
Fix $k\ge1$. Plugging
\eqref{consm1} in the loss term of
\eqref{SE_prod}, we obtain
\begin{align*}
\sum_{y\in\mathcal S}\sum_{j\ge1}
kjG(x,y)\lambda_t(j,y)
=
k\sum_{y\in\mathcal S}G(x,y)\sum_{j\ge1}j\lambda_t(j,y)
=
k\sum_{y\in\mathcal S}G(x,y)\lambda_0(1,y)
=
kp(x),
\end{align*}
where $p(x):=\sum_{y\in\mathcal S}G(x,y)\lambda_0(1,y)$. Likewise, the gain term becomes
\begin{align*}
g_{k}(x,t):=\frac1k\sum_{y\in\mathcal S}G(x,y)\sum_{j=1}^{k-1}
j^2(k-j)\lambda_t(j,x)\lambda_t(k-j,y).
\end{align*}
This proves \eqref{eq:triangular-thm}--\eqref{eq:gk-thm}.
Now, $\lambda_t(1,x)$ can be explicitly computed.
For $k=1$, since the gain term is empty,
$$
\frac{d}{dt}\lambda_t(1,x)+p(x)\lambda_t(1,x)=0.
$$
Therefore
$$
\lambda_t(1,x)=\lambda_0(1,x)e^{-tp(x)}.
$$
From here, we can get a recursive formula for $k\ge2$.
Indeed, Eq.~\eqref{eq:gk-thm} that we have obtained so far, is a linear inhomogeneous ODE:
$$
\frac{d}{dt} \lambda_t(k,x)+kp(x)\lambda_t(k,x)=g_k(x,t).
$$
Since $\lambda_0(k,x)=0$ for all $k\ge2$, we obtain
$$
\lambda_t(k,x)
=
e^{-k t p(x)}
\int_0^t e^{k s p(x)}g_k(s,x)ds,
$$
which is \eqref{lambdak}.
\end{proof}

\begin{proof}[Proof of Theorem ~\ref{thm:riccati-general}]
Let $t$ be such that $M_2(t)\in(0,\infty)$, and recall
\begin{equation}\label{pt_recall}
p_t(x)=\frac{m_2(x,t)}{M_2(t)},
\qquad x\in\mathcal S.
\end{equation}
Since $m_2(x,t)\ge 0$ for every $x\in\mathcal S$, and
$$
\sum_{x\in\mathcal S} m_2(x,t)=M_2(t),
$$
it follows that $p_t$ is a probability measure on $\mathcal S$, and therefore
$
p_t(x)\ge 0$,
with 
$\sum_{x\in\mathcal S} p_t(x)=1.
$
By Proposition~\ref{prop:moment-equations}, we have
$$
\frac{d}{dt}M_2(t)
=
\sum_{x,y\in\mathcal S} G(x,y)m_2(x,t)m_2(y,t).
$$
Using the identity (from \eqref{pt_recall})
$
m_2(x,t)=M_2(t)p_t(x),
$
we obtain
\begin{align*}
\frac{d}{dt}M_2(t)
&=
\sum_{x,y\in\mathcal S} G(x,y)M_2(t)p_t(x)M_2(t)p_t(y)=
M_2(t)^2\sum_{x,y\in\mathcal S} G(x,y)p_t(x)p_t(y).
\end{align*}
Therefore
$$
\frac{d}{dt}M_2(t)=\alpha(t)M_2(t)^2,
$$
where
$$
\alpha(t):=\sum_{x,y\in\mathcal S} G(x,y)p_t(x)p_t(y).
$$
This proves \eqref{eq:Riccati-general}.
Since $M_2(t)>0$, we may rewrite \eqref{eq:Riccati-general} as
$
\frac{d}{dt}\Bigl(\frac{1}{M_2(t)}\Bigr)
=
-\alpha(t).
$
Integrating from $0$ to $t$, we get
$
\frac{1}{M_2(t)}=\frac{1}{M_2(0)}-\int_0^t \alpha(s)ds,
$
that is,
$$
M_2(t)
=
\frac{1}{M_2(0)^{-1}-\int_0^t\alpha(s)ds}.
$$
Finally, the above identity shows that $M_2(t)$ blows up when the denominator vanishes, hence
$$
t_{\mathrm{gel}}
=
\inf\Bigl\{
t>0:\int_0^t\alpha(s)ds=M_2(0)^{-1}
\Bigr\}.
$$
This concludes the proof.
\end{proof}

\begin{proof}[Proof of Corollary~\ref{cor:sharp-bounds}]
By Theorem~\ref{thm:riccati-general} (see Eq.~\eqref{eq:alfa}),
$
\alpha(t)=\sum_{x,y\in\mathcal S} G(x,y)p_t(x)p_t(y),
$
where $p_t$ is a probability distribution on $\mathcal S$. Hence
$p_t(x)p_t(y)\ge 0$ for all $x,y\in\mathcal S$, and
$$
\sum_{x,y\in\mathcal S} p_t(x)p_t(y)
=
\Bigl(\sum_{x\in\mathcal S} p_t(x)\Bigr)
\Bigl(\sum_{y\in\mathcal S} p_t(y)\Bigr)
=1.
$$
Therefore $\alpha(t)$ is a convex combination of the values
$\{G(x,y):x,y\in\mathcal S\}$, so that
$$
\inf_{x,y\in\mathcal S} G(x,y)
\le
\alpha(t)
\le
\sup_{x,y\in\mathcal S} G(x,y).
$$
Set
$
a:=\inf_{x,y\in\mathcal S} G(x,y)$,
$
b:=\sup_{x,y\in\mathcal S} G(x,y).
$
Then Theorem~\ref{thm:riccati-general} gives
$$
\frac{d}{dt}M_2(t)=\alpha(t)M_2(t)^2,
$$
hence
\begin{equation}\label{ineq:dM2}
aM_2(t)^2
\le
\frac{d}{dt}M_2(t)
\le
bM_2(t)^2.
\end{equation}
We first prove the lower bound on $t_{\mathrm{gel}}$. Since
$$
\frac{d}{dt}\Bigl(\frac{1}{M_2(t)}\Bigr)
=
-\frac{1}{M_2(t)^2}\frac{d}{dt}M_2(t),
$$
the inequalities in \eqref{ineq:dM2} imply
$
-b
\le
\frac{d}{dt}\Bigl(\frac{1}{M_2(t)}\Bigr)
\le
-a.
$
Integrating from $0$ to $t<t_{\mathrm{gel}}$, we obtain
$$
\frac{1}{M_2(0)}-bt
\le
\frac{1}{M_2(t)}
\le
\frac{1}{M_2(0)}-at.
$$
Now, for every $t<t_{\mathrm{gel}}$, one has $M_2(t)<\infty$, hence
$\frac1{M_2(t)}>0$. Therefore the quantity
$\frac{1}{M_2(0)}-bt$ must remain strictly below the first possible time at which
$\frac1{M_2(t)}$ can vanish. It follows that
$$
t_{\mathrm{gel}}
\ge
\frac{1}{bM_2(0)}
=
\frac{1}{\sup_{x,y\in\mathcal S} G(x,y)M_2(0)}.
$$
We next prove the upper bound. By the characterisation of the gelation time in
Theorem~\ref{thm:riccati-general},
$$
t_{\mathrm{gel}}
=
\inf
\Bigl\{
t>0:\int_0^t \alpha(s)ds=M_2(0)^{-1}
\Bigr\}.
$$
Since $\alpha(s)\ge a$ for all $s$, we have
$
\int_0^t \alpha(s)ds \geq at.
$
Hence, if $a>0$, then at time
$
t=\frac{1}{aM_2(0)}
$
the integral has already reached $M_2(0)^{-1}$, and therefore
$$
t_{\mathrm{gel}}
\le
\frac{1}{aM_2(0)}
=
\frac{1}{\inf_{x,y\in\mathcal S} G(x,y)M_2(0)}.
$$
If $a=0$, the right-hand side is understood as $+\infty$, so the estimate remains valid. This concludes the proof.
\end{proof}

\begin{proof}[Proof of Theorem \ref{thm:replicator-mercer}]
We start by differentiating with respect to $t$ equation \eqref{eq:pt} defining $p_t(x)$,
\[
\frac{d}{dt}p_t(x)
=
\frac{1}{M_2(t)}\frac{d}{dt}m_2(x,t)
-
p_t(x)\frac{1}{M_2(t)}\frac{d}{dt}M_2(t).
\]
Now expanding each term, by Proposition~\ref{prop:moment-equations},
\[
\frac{d}{dt}m_2(x,t)
=
m_2(x,t)\sum_{y\in\mathcal S}G(x,y)m_2(y,t)=
M_2^2(t)p_t(x)\sum_{y\in\mathcal S } G(x,y)p_t(y),
\]
and by Theorem~\ref{thm:riccati-general},
\[
\frac{1}{M_2(t)}\frac{d}{dt}M_2(t)
=
\alpha(t)M_2(t).
\]
Therefore,
\[
\frac{d}{dt}p_t(x)
=
M_2(t)p_t(x)
\left((Gp_t)(x)-\alpha(t)\right).
\]
To remove the factor $M_2(t)$, we introduce the rescaled time $\tau(t)=\int_0^t M_2(s){\rm d}s$. Since $M_2(t)\ge 0$ and is finite on
$[0,T_1)$, $\tau$ is well-defined and nondecreasing, which yields the claimed equation 
\[
\frac{d}{d\tau}p_\tau(x)=
\frac{1}{M_2(t)}
\frac{d}{dt}p_t(x)=p_\tau(x)\left((Gp_\tau)(x)-\alpha(\tau)\right).
\]
This turns out to be a replicator equation for evolving measure $p_\tau(x)$ with nonlinear payoff function $\alpha(\tau)$. We will now show that the latter is nondecreasing and use this property to obtain lower and upper bounds.
Since $G$ is symmetric by Mercer property and the series converges absolutely,
\[
\frac{d}{d\tau}\alpha(\tau)
=
2\sum_{x\in\mathcal S} \frac{d}{d\tau}p_\tau(x)\,(Gp_\tau)(x).
\]
Substituting \eqref{eq:replicator-mercer} and then using that $\sum_{x\in\mathcal S} p_\tau(x)(Gp_\tau)(x)=\alpha(\tau)$, we get
$$
\begin{aligned}
\frac{d}{d\tau}\alpha(\tau) 
&= 2\sum_{x\in\mathcal S} p_\tau(x)\bigl((Gp_\tau)(x)-\alpha(\tau)\bigr)(Gp_\tau)(x)\\
&=2\Big(\sum_{x\in\mathcal S} p_\tau(x)(Gp_\tau)(x)^2-\big(\sum_x p_\tau(x)(Gp_\tau)(x)\big)^2\Big)\\
&=2\,\mathrm{Var}_{p_\tau}(Gp_\tau)\ge 0.
\end{aligned}
$$
Moreover, since $d\tau/dt=M_2(t)\ge 0$, $\alpha$ is nondecreasing in $t$ as well, and we bound $\alpha(t)\ge\alpha(0)$.
Furthermore, we optimise \eqref{eq:alfa} using that $G$ is positive semidefinite; since $G$ is also bounded, say $|G_{x,y}|\le C$, we obtain
\[
\alpha(t)\le\alpha_{\max}
=
\sup_{\substack{q:\mathcal S\to[0,1]\\ \sum_x q(x)=1}}
\sum_{x,y\in\mathcal S} G_{x,y}q(x)q(y)
\le C\sum_{x,y\in\mathcal S} q(x)q(y)=C<\infty.
\]
To deduce the gelation bounds, rewrite the Riccati equation
\eqref{eq:Riccati-general} in the form
\[
\frac{d}{dt}\Bigl(\frac{1}{M_2(t)}\Bigr)=-\alpha(t).
\]
Since $\alpha(t)$ is nondecreasing and satisfies
$
\alpha(0)\le \alpha(t)\le \alpha_{\max}$.
Integrating over $[0,t]$ gives
\[
\frac{1}{M_2(0)}-\alpha_{\max} t \le \frac{1}{M_2(t)}\le\frac{1}{M_2(0)}-\alpha(0)t.
\]
The blow-up time of $M_2(t)$ is therefore bounded between the two corresponding linear hitting times, which yields \eqref{eq:bounds-mercer}. Finally note that if $\alpha_{\max}=0$, then $\alpha(\tau)\equiv 0$ and no gelation occurs.
\end{proof}

\section*{Acknowledgments}
 IK gratefully acknowledges support from Netherlands Research Organisation (NWO), research program VIDI, project number VI.Vidi.213.108.

\bibliographystyle{abbrv}
\bibliography{ref}

\end{document}